\theoremstyle{definition}
\newtheorem{theorem}{{Theorem}}[section]
\newcommand{\hfhat}{\widehat{\textit{HF}}}
\newcommand{\HFKhat}{\widehat{\textit{HFK}}}
\newcommand{\HFKminus}{\textit{HFK}^{-}}
\newcommand{\lossminus}{\mathfrak{L}(L)}
\newcommand{\lossl}{\widehat{\mathfrak{L}}(L)}
\newcommand{\cc}{\mathbf{c}}
\newcommand{\xx}{\mathbf{x}}
\title{A combinatorial description of the LOSS Legendrian knot invariant}
\author[D. He]{Dongtai He}
\address {Shanghai Center for Mathematical Science, Fudan University, Shanghai, China 200438}
\email{dongtaihe@fudan.edu.cn}
\author[L. Truong]{Linh Truong}
\address {Department of Mathematics, Ann Arbor, MI 48103}
\thanks{LT was partially supported by NSF grant DMS-2005539.}
\email{tlinh@umich.edu}
\begin{document}

\begin{abstract} In this note, we observe that the hat version of the Heegaard Floer invariant of Legendrian knots in contact three-manifolds defined by Lisca-Ozsv\'ath-Stipsicz-Szab\'o can be combinatorially computed. We rely on Plamenevskaya's combinatorial description of the Heegaard Floer contact invariant. 
\end{abstract}
\maketitle

\section{Introduction} 
Since the construction of Heegaard Floer invariants introduced by Ozsv\'ath and Szab\'o \cite{OS}, many mathematicians have been studying its computational aspects. Although the definition of Heegaard Floer homology involves analytic counts of pseudo-holomorphic disks in the symmetric product of a surface, certain Heegaard Floer homologies admit a purely combinatorial description. Sarkar and Wang \cite{SarkarWang} showed that any closed, oriented three-manifold admits a nice Heegaard diagram, in which the holomorphic disks in the symmetric product can be counted combinatorially. Using grid diagrams to represent knots in $S^3$, Manolescu, Ozsv\'ath, and Sarkar gave a combinatorial description of knot Floer homology \cite{MOS}. Given a connected, oriented four-dimensional cobordism between two three-manifolds, under additional topological assumptions, Lipshitz, Manolescu, and Wang \cite{LMW} present a procedure for combinatorially determining the rank of the induced Heegaard Floer map on the hat version. 

For three-manifolds $Y$ equipped with a contact structure $\xi$, Ozsv\'ath and Szab\'o \cite{OS-contact} associate a homology class $\cc(\xi) \in \hfhat(-Y)$ in the Heegaard Floer homology which is an invariant of the contact manifold. In \cite{Plamenevskaya}, Plamenevskaya provides a combinatorial description of the Heegaard Floer contact invariant \cite{OS-contact}, by applying the Sarkar-Wang algorithm to the Honda-Kazez-Mati\'c description \cite{HKM} of the contact invariant. 

\begin{theorem}[\cite{Plamenevskaya}] \label{thm:combinatorialcontact}
Given a contact 3-manifold $(Y, \xi)$, the Heegaard Floer contact invariant $\cc(\xi) \in \hfhat(-Y)$ can be computed combinatorially. 
\end{theorem}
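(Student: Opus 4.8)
The plan is to combine the Honda--Kazez--Mati\'c open book description of the contact class with the Sarkar--Wang algorithm, the only genuinely new point being to control the image of the contact generator under the diagram-simplification maps.

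First I would invoke the Giroux correspondence to fix a compatible open book $(S,\phi)$ for $(Y,\xi)$ and feed it into the Honda--Kazez--Mati\'c construction \cite{HKM}. This produces a pointed Heegaard diagram $(\Sigma,\boldsymbol{\alpha},\boldsymbol{\beta},z)$ adapted to the open book---with $\Sigma$ glued from two pages, the $\alpha_i$ coming from a cut system of arcs on $S$ and the $\beta_i$ from their monodromy-translated pushoffs---together with a distinguished intersection point $\xx_0$, built from the intersections of $\alpha_i$ and $\beta_i$ nearest the binding, such that $[\xx_0]=\cc(\xi)$ in $\hfhat(-Y)$. A structural feature I would record here is that $\xx_0$ admits no positive Whitney disks disjoint from $z$, so that $\xx_0$ is automatically a cycle; this \emph{extremality} is exactly what the open book construction buys. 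In general this diagram is not nice.

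Next I would run the Sarkar--Wang algorithm \cite{SarkarWang}, which through a finite sequence of finger-move isotopies of the $\boldsymbol{\beta}$ curves (none crossing $z$) converts the diagram into a nice one $(\Sigma,\boldsymbol{\alpha},\boldsymbol{\beta}',z)$, in which every region other than the one containing $z$ is a bigon or a rectangle. In such a diagram the differential on $\cfhat$ counts only the embedded bigons and rectangles, so it is manifestly combinatorial, and hence so is the resulting homology $\hfhat(-Y)$.

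The heart of the argument, and the step I expect to be the main obstacle, is to track $\cc(\xi)$ through this simplification. Each finger move is an isotopy of a $\beta$-curve across an $\alpha$-curve and induces a chain homotopy equivalence---the continuation map---on $\cfhat$, and I must compute the total image of $\xx_0$. The plan is to show that $\xx_0$ is carried to a single, explicitly identifiable generator $\xx_0'$ of the nice diagram. To leading order the continuation map is the nearest-point map sending each generator to the closest new intersection point; the real content is that the higher holomorphic-disk corrections vanish on $\xx_0$. I would prove this by arranging the finger moves to be supported in the interior of the pages, away from the binding-adjacent intersection points comprising $\xx_0$, and then invoking the extremality recorded above: any correction term would be the output of a positive domain leaving $\xx_0$ and avoiding $z$, of which there are none. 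It then follows that $[\xx_0']=\cc(\xi)$ in the nice diagram, and since both the cycle $\xx_0'$ and the differential are combinatorial, the contact invariant is combinatorially computable.
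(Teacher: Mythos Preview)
Your outline is in the right spirit, but it takes a harder road than Plamenevskaya's actual argument and the extremality step, as you have written it, is not quite enough on its own. The point you are missing is that one can arrange the Sarkar--Wang finger moves to be supported entirely on the monodromy page $-\Sigma$ of $S=\Sigma\cup -\Sigma$; the standard page $\Sigma$ already consists only of the thin bigon strips between $a_i$ and $b_i$ together with the large region carrying $w$, so all the bad regions live on $-\Sigma$ and the algorithm never needs to touch $\Sigma$. Once you make this observation, the resulting nice diagram is \emph{literally} the Honda--Kazez--Mati\'c diagram for the open book $(\Sigma,\psi\circ h)$ with $\psi$ isotopic to the identity, hence for the same contact structure $\xi$. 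The intersection points $x_1,\dots,x_{2g}$ on $\Sigma$ are untouched, and HKM's theorem applied to this equivalent open book immediately identifies them as a cycle representing $\cc(\xi)$. No continuation maps, no tracking, no correction terms.

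By contrast, your plan asks you to analyze the triangle-count continuation map and kill its higher terms via extremality of $\xx_0$. But the extremality you recorded---no positive Whitney disks \emph{from} $\xx_0$ avoiding the basepoint in the original diagram---does not directly control the holomorphic triangles in the triple diagram $(\Sigma,\boldsymbol\beta,\boldsymbol\beta',\boldsymbol\alpha)$ that define the continuation map; those are different domains with an extra set of curves and a $\Theta$-vertex. One can push such an argument through with additional work (e.g.\ by restricting the support of the isotopy and using an area/energy filtration), but it is both more delicate and unnecessary once you notice that the nice diagram is again of HKM type.
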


A Legendrian knot $L$ in a contact 3-manifold $(Y, \xi)$ is a knot which is everywhere trangent to the contact plane field $\xi$. 
Lisca, Ozsv\'ath, Stipsicz, and Szab\'o define invariants for Legendrian knots inside a contact 3-manifold \cite{loss}, colloquially referred to as the LOSS invariants.  The definition uses open book decompositions and extend the Honda-Kazez-Mati\'c interpretation of the contact invariant. The LOSS invariants are the homology class of a special cycle in the knot Floer homology of the Legendrian knot (see Theorem~\ref{thm:loss} for details). It is natural to ask whether the LOSS invariants admit a combinatorial description. 

In the case of Legendrian knots inside the standard tight contact 3-sphere, Ozsv\'ath, Szab\'o, and Thurston define the GRID invariants \cite{OST}, which are special classes in the knot Floer homology of the Legendrian. The GRID invariants are inherently combinatorial, defined using grid diagrams.  In comparison, by relying on open book decompositions, the LOSS invariants are defined for Legendrian knots in arbitrary contact 3-manifolds, encompassing much greater generality than their GRID counterparts.  

Baldwin, Vela-Vick, and Vertesi \cite{BVVV} prove that the Legendrian invariants  and the GRID invariants  in knot Floer homology agree for Legendrian knots in the tight contact 3-sphere. They introduce an invariant, called BRAID, for transverse knots and Legendrian knots  $L$ inside a tight contact 3-sphere $(S^3, \xi_\text{std})$, which recovers both the LOSS and GRID invariants. 

For Legendrian and transverse links in universally tight lens spaces, Tovstopyat-Nelip \cite{lev} uses grid diagrams to define invariants which generalize the GRID invariants of \cite{OST} and agree with the BRAID and LOSS invariants defined in \cite{BVVV} and \cite{loss}. These invariants are combinatorial, but the contact three-manifolds are limited to universally tight lens spaces. 

We study the question of whether the Legendrian and transverse link invariants in arbitrary contact 3-manifolds of \cite{loss} admit a combinatorial description. In the case of the hat version of the LOSS invariant, we adapt Plamenevskaya's \cite{Plamenevskaya} proof of Theorem~\ref{thm:combinatorialcontact} to obtain a combinatorial description of $\lossl$. 
\begin{theorem}\label{thm:main}
Given a contact 3-manifold $(Y, \xi)$ and a Legendrian knot $L \subset Y$, the LOSS invariant $\lossl$ can be computed combinatorially.
\end{theorem}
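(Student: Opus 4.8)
The plan is to adapt Plamenevskaya's niceification argument behind Theorem~\ref{thm:combinatorialcontact} to the doubly-pointed setting that computes knot Floer homology. First I would recall the construction of the LOSS invariant. Starting from the Legendrian knot $L \subset (Y,\xi)$, one chooses an open book decomposition compatible with $\xi$ so that $L$ lies on a page, with page framing equal to the contact framing, and picks a basis of arcs $\{a_1,\dots,a_n\}$ adapted to $L$ (so that $L$ meets the distinguished arc geometrically once and is disjoint from the others). As in \cite{loss}, this produces a doubly-pointed Heegaard diagram $\mathcal{H} = (\Sigma, \boldsymbol{\beta}, \boldsymbol{\alpha}, w, z)$ for $(-Y, L)$ together with a distinguished contact generator $\xx_0 = \{x_1,\dots,x_n\}$, and $\lossl$ is by definition the homology class $[\xx_0] \in \HFKhat(-Y, L)$. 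Forgetting the basepoint $z$ recovers exactly the Honda--Kazez--Mati\'c diagram used by Plamenevskaya, with $\xx_0$ the contact generator representing $\cc(\xi) \in \hfhat(-Y)$.

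The second step is to make $\mathcal{H}$ nice. Applying the Sarkar--Wang algorithm \cite{SarkarWang} in its multi-pointed form (as in \cite{MOS}, treating $z$ on the same footing as $w$), I would convert $\mathcal{H}$ by a finite sequence of finger-move isotopies of the $\boldsymbol{\beta}$-curves into a diagram in which every region of $\Sigma \setminus (\boldsymbol{\alpha} \cup \boldsymbol{\beta})$ disjoint from both $w$ and $z$ is a bigon or a rectangle. In such a nice doubly-pointed diagram the differential on the fully blocked complex $\widehat{\CFK}$, which counts disks with $n_w = n_z = 0$, enumerates only empty embedded bigons and rectangles avoiding $w$ and $z$, so $\HFKhat(-Y,L)$ is computed combinatorially.

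The heart of the proof is tracking $\xx_0$ through the niceification. Each finger move is an isotopy of the diagram and induces a chain homotopy equivalence on $\widehat{\CFK}$; composing these, I obtain an equivalence $\widehat{\Phi}$ from the original complex to the nice one. Following Plamenevskaya, the key lemma is that $\xx_0$ is carried to the canonical generator $\xx_0^{\mathrm{nice}}$ of the nice diagram: her argument identifies the image of the contact generator by showing that the only disk contributing to the relevant continuation map near each intersection point is a small disk supported in a neighborhood of the finger. The new ingredient is the second basepoint. Since the doubly-pointed maps count only the disks avoiding $z$ as well as $w$, they see a subset of the disks appearing in Plamenevskaya's single-basepoint computation. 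Arranging that $z$ lies in a region disjoint from the support of the finger moves near the $x_i$ --- which is possible because $L$ determines $z$ only up to isotopy in the complement of $\xx_0$ --- the small disks realizing $\widehat{\Phi}(\xx_0) = \xx_0^{\mathrm{nice}}$ still avoid $z$, so Plamenevskaya's computation passes verbatim. Hence $\lossl = [\xx_0^{\mathrm{nice}}]$, a class in the combinatorially computed $\HFKhat(-Y, L)$.

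The main obstacle I anticipate is precisely this last compatibility: verifying that the niceification and the generator-tracking respect the extra basepoint $z$. Concretely, one must check that the multi-pointed Sarkar--Wang moves can be chosen so that $z$ neither obstructs the finger moves used near $\xx_0$ nor forces additional disks through $z$ into the continuation maps, and confirm that the resulting identification is independent of these choices (equivalently, compatible with the Alexander filtration induced by $z$). Once the placement of $z$ relative to the finger moves is pinned down, the remaining steps --- that niceness yields a combinatorial differential and that the tracked generator represents $\lossl$ --- are formal.
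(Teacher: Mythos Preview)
Your proposal is a reasonable strategy, but it diverges from the paper's in a way that makes the ``main obstacle'' you flag genuinely hard, whereas the paper sidesteps it entirely.

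The paper does \emph{not} run a multi-pointed Sarkar--Wang algorithm treating $z$ on the same footing as $w$, nor does it track the contact generator through continuation maps. Instead it exploits the specific geometry of the Honda--Kazez--Mati\'c/LOSS diagram: writing $S=\Sigma\cup(-\Sigma)$, all of Plamenevskaya's niceifying finger moves act on the $\beta$-curves and take place entirely on the monodromy side $-\Sigma$. The second basepoint $z$, the arcs $\delta_\alpha,\delta_\beta$ joining $w$ to $z$, and the intersection points $x_1,\dots,x_{2g}$ all live on the standard side $\Sigma$. Hence the isotopies never cross $\delta_\alpha$ or $\delta_\beta$ (so the resulting doubly-pointed diagram still presents $(-Y,L)$), and they never move the $x_i$ (so the LOSS generator is literally the same tuple of points in the new diagram). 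The upshot is that the niceified diagram is again a LOSS diagram, now for the equivalent open book $(\Sigma,\psi\circ h)$ with $\psi$ isotopic to the identity; the class $[\xx_0]$ is identified for free, with no chain-homotopy tracking needed. Niceness is achieved with respect to $w$ alone (every region other than $D_w$ is a bigon or square), which already suffices to make the $\HFKhat$ differential combinatorial.

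By contrast, your route---generic multi-pointed niceification followed by continuation-map tracking---loses this structure. Once you allow finger moves that are not confined to $-\Sigma$, the resulting nice diagram need not be of LOSS type, so there is no ``canonical generator $\xx_0^{\mathrm{nice}}$'' to aim for; you really would have to compute the image of $\xx_0$ under a composite of triangle maps and argue it is a single generator. Your description of Plamenevskaya's argument as a continuation-map computation is also not quite how it goes: her point (as restated in Theorem~\ref{thm:plam}) is structural---the niceified diagram is the HKM diagram of an isotopic monodromy---so the generator is identified by definition, not by counting small triangles. Your approach could probably be made to work, but the paper's observation that the isotopies stay on $-\Sigma$ is exactly the missing idea that dissolves the obstacle you anticipate.
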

\noindent
In Theorem~\ref{thm:combinatorial}, we find a doubly-pointed Heegaard diagram for $(-Y, L)$ which is a nice diagram in the sense of Sarkar-Wang \cite{SarkarWang}. Nice diagrams are Heegaard diagrams $(S, \alpha, \beta, w)$ in which any region of $S \setminus (\alpha \cup \beta)$ not containing the basepoint $w$ is either a bigon or a square. Nice diagrams have the property that the counts of pseudo-holomorphic disks which appear in the Heegaard Floer differential maps are combinatorially determined. Theorem~\ref{thm:main} then follows.

It is known that the LOSS invariants also define transverse knot invariants via Legendrian approximation. Thus, it follows from Theorem~\ref{thm:main} that the hat version of the LOSS invariant of transverse knots can also be computed combinatorially. 

\section{The Legendrian LOSS invariants}

Suppose that $L \subset (Y, \xi)$ is a Legendrian knot in a contact three-manifold. Consider an open book decomposition $(\Sigma, \phi)$ compatible with $\xi$ containing $L$ on a page. Recall that Honda-Kazez-Mati\'c construct a Heegaard diagram for $-Y$ associated to $(\Sigma, \phi)$. This Heegaard diagram gives rise to an explicit description of the Heegaard Floer contact invariant $c(Y, \xi)\in\hfhat (-Y)$ originally defined in \cite{OS-contact}, by identifying the contact invariant with the homology class of the cycle $\cc = \{c_1, \dots, c_{2g}\}$, a $2g$-tuple of intersection points in the Heegaard diagram.

Lisca-Ozsv\'ath-Stipsicz-Szab\'o build on the Honda-Kazez-Matic algorithm to give a doubly-pointed Heegaard diagram $(S, \beta, \alpha, w, z)$ for $(-Y, \xi, L)$. 
By \cite[Lemma 3.1]{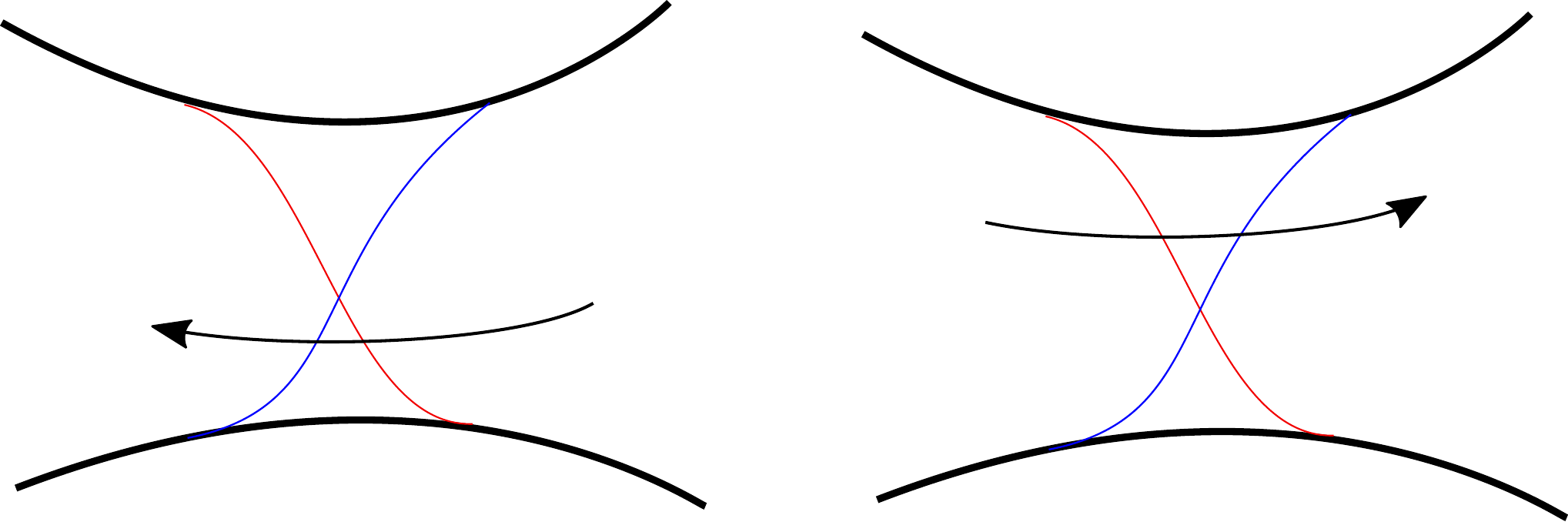}, there is a basis of arcs $\{a_1, \dots, a_{2g}\}$ for $\Sigma$ adapted to $L$, meaning that $L \cap a_i= \emptyset$ for $i \geq 2$ and $L$ intersects the arc $a_1$ in a unique transverse point. That the set of arcs $\{a_1, \dots, a_{2g}\}$ forms a basis for $\Sigma$ means that the arcs are disjoint properly embedded arcs on $\Sigma$ such that $\Sigma \setminus \cup_{i=1}^n a_i$ is a disk. We obtain another basis $\{b_1, \dots, b_{2g}\}$ for $\Sigma$, where each arc $b_i \subset \Sigma$ is obtained from $a_i$ by a small isotopy in the direction given by the boundary orientation, and $b_i$ intersects $a_i$ in exactly one transverse intersection point $x_i \in \Sigma$, as in Figure~\ref{fig:hkm_ob}.

\begin{figure}[h]
\centering
\begin{overpic}[abs,unit=1mm,scale=.5]{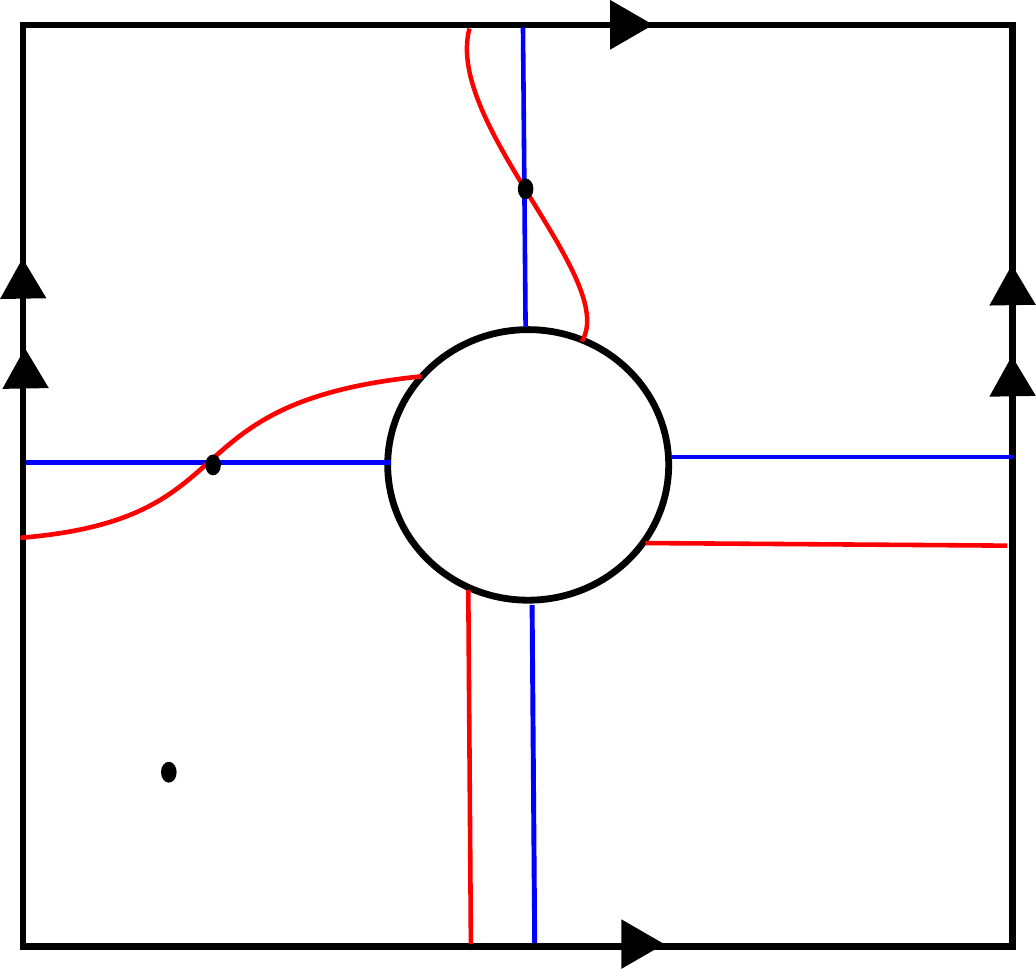}
\put(10,10){$w$}
\put(3,20){$\alpha_1$}
\put(3,28){$\beta_1$}
\put(10,23){$x_1$}
\put(19,3){$\alpha_2$}
\put(28,3){$\beta_2$}
\put(22,39){$x_2$}
\put(33,30){$\partial \Sigma$}
\end{overpic}
\centering
\caption{An example of $(\Sigma, \alpha, \beta, w)$, where $\Sigma$ is a surface of genus 1 with one boundary component, $\alpha=\{\alpha_1, \alpha_2\}$ consists of the two red arcs, and $\beta =\{\beta_1, \beta_2\}$ consists of small isotopic translates of the $\alpha$ arcs. }
\label{fig:hkm_ob}
\end{figure}

Let: 
\begin{itemize}
\item $S = \Sigma \cup -\Sigma$, or the union of two copies of the fiber surface $\Sigma$ glued along their boundary,
\item $\alpha = \{\alpha_1, \dots, \alpha_{2g}\}$, where $\alpha_i = a_i \cup \overline{a_i}$, where $\overline{a_i} \subset -\Sigma$ is a copy of $a_i$,
\item $\beta = \{\beta_1, \dots,\beta_{2g}\}$, where $\beta_i = b_i \cup \overline{\phi(\beta_i)}$, where $\overline{\phi(b_i)} \subset -\Sigma$ is a copy of $\phi(b_i)$, 
\item the basepoint $w \in \Sigma$ lies outside of the thin regions created by the isotopies between $a_i$ and $b_i$,
\item the  basepoint $z \in \Sigma $ is placed in one of the two thin regions between $a_1$ and $b_1$, determined by the orientation of the Legendrian $L$. See Figure~\ref{fig:loss}.
\end{itemize}
\noindent
In particular, the $(S, \beta, \alpha, w)$ agrees with the Honda-Kazez-Matic singly-pointed Heegaard diagram for $-Y$.

\begin{figure}[h]
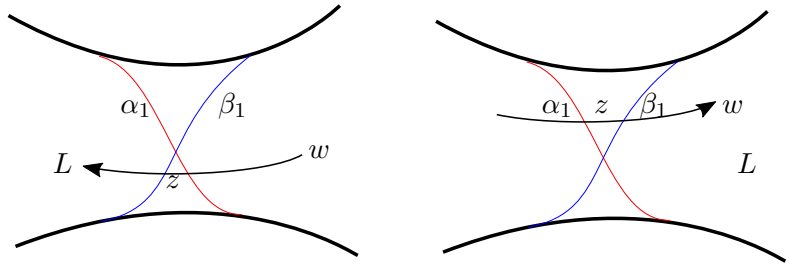

\centering
\begin{overpic}[abs,unit=1mm,scale=.5]{loss}
\put(15,20){$\alpha_1$}
\put(28,20){$\beta_1$}
\put(71,20){$\alpha_1$}
\put(84,20){$\beta_1$}
\put(40,14){$ w$}
\put(21, 10){$ z$}
\put(78, 20){$ z$}
\put(95, 20){$ w$}
\put(6, 12){$ L$}
\put(97, 12){$ L$}
\end{overpic}
\centering
\caption{The basepoint is placed in one of the two regions of the small isotopy between the arcs $\alpha_1$ and $\beta_1$, corresponding to the orientation of the Legendrian knot $L$.} 
\label{fig:loss}
\end{figure}

The contact invariant $c(\xi) \in \hfhat(-Y)$ is the homology class of the cycle $\cc = \{x_1, \dots, x_{2g}\}$ in $ \hfhat(S, \beta, \alpha, w)$. The combinatorial description of the contact invariant $c(\xi)$ stated in Theorem~\ref{thm:combinatorialcontact} is a corollary of Theorem~\ref{thm:plam}. 

\begin{theorem}[{\cite[Theorem 2.1]{Plamenevskaya}}] 
\label{thm:plam}
Suppose $(\Sigma, h)$ is an open book decomposition for the contact 3-manifold $(Y, \xi)$. 
There exists an equivalent open book decomposition $(\Sigma, h'')$  for $(Y, \xi)$ such that singly-pointed Heegaard diagram described by Honda-Kazez-Matic for $-Y$ has only bigon and square regions (except for the one polygonal region $D_w$ containing the basepoint $w$). The monodromy $h''$ differs from $h$ by an isotopy; that is, $h'' = \psi \circ h$, where $\psi: \Sigma \to \Sigma$ is a diffeomorphism fixing the boundary and isotopic to the identity. 
\end{theorem}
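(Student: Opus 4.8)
The plan is to run the Sarkar--Wang nice-diagram algorithm on the Honda--Kazez--Matić diagram $(S,\beta,\alpha,w)$ for $-Y$, but to arrange \emph{all} of the required isotopies so that they are supported on the $-\Sigma$ half of the Heegaard surface, and then to reinterpret this family of isotopies as a single modification of the monodromy by a map isotopic to the identity. First I would recall the structure of the regions of $(S,\beta,\alpha,w)$. On the $\Sigma$ side the arcs $a_i$ and $b_i$ differ by a small boundary-parallel isotopy, so together they bound only thin bigons, apart from the one large region carrying $w$; hence all of the combinatorial complexity is concentrated on the $-\Sigma$ side, where the arcs $\overline{a_i}$ meet the potentially complicated arcs $\overline{h(b_i)}$. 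Every region that fails to be a bigon or a square (other than $D_w$) can therefore be located in $-\Sigma$, and it suffices to simplify the diagram there.

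Next I would apply the Sarkar--Wang algorithm, which takes any pointed Heegaard diagram and, after finitely many finger moves (isotopies of the $\beta$-curves pushing a $\beta$-arc across an $\alpha$-arc), outputs a diagram whose non-basepoint regions are all bigons or squares. The essential adaptation is to verify that these finger moves can be carried out entirely within the interior of $-\Sigma$, leaving each arc $b_i \subset \Sigma$ pointwise fixed: since the complexity that Sarkar--Wang decreases is measured on the bad regions, and those lie in $-\Sigma$, the algorithm only ever needs to isotope the arcs $\overline{h(b_i)}$. Because such isotopies fix the boundary circle $\partial \Sigma$ where $b_i$ and $\overline{h(b_i)}$ meet, the $\Sigma$-side is undisturbed, the algorithm still terminates, and one obtains a nice diagram $(S,\beta'',\alpha,w)$ with $\beta_i'' = b_i \cup \overline{h''(b_i)}$ for new arcs $h''(b_i)$ on $\Sigma$.

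Finally I would package these isotopies as a change of monodromy. The cumulative isotopy carrying each $\overline{h(b_i)}$ to its simplified position is an isotopy of a properly embedded $1$-manifold in $-\Sigma$ rel $\partial$, which by the isotopy extension theorem (performed rel a collar of the boundary) extends to an ambient isotopy and hence to a diffeomorphism $\psi \colon \Sigma \to \Sigma$ that fixes $\partial \Sigma$ pointwise and is isotopic to the identity. Setting $h'' = \psi \circ h$ gives $\overline{h''(b_i)} = \overline{\psi(h(b_i))}$, so $(\Sigma,h'')$ produces exactly the nice diagram; and since $\psi \simeq \mathrm{id}$ rel boundary, the open books $(\Sigma,h)$ and $(\Sigma,h'')$ are equivalent and support the same contact structure $\xi$ on $Y$.

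The main obstacle I anticipate lies in the middle step: one must confirm that confining every Sarkar--Wang finger move to $-\Sigma$ does not obstruct the algorithm's guaranteed termination, and that the individual arc isotopies can be realized \emph{simultaneously} by one ambient isotopy fixing the boundary. Only then do they assemble into a single diffeomorphism $\psi$, rather than an uncoordinated sequence of arc moves, which is precisely what is needed to express the simplification as $h'' = \psi \circ h$.
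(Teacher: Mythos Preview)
Your proposal is correct and follows essentially the same approach as the paper (which in turn summarizes Plamenevskaya's original argument): apply the Sarkar--Wang algorithm via finger moves on the $\beta$-curves, confined to the monodromy side $-\Sigma$ where all the bad regions live, and then interpret the composite isotopy as a diffeomorphism $\psi\colon\Sigma\to\Sigma$ isotopic to the identity rel boundary, so that the new monodromy is $h''=\psi\circ h$. Your explicit invocation of the isotopy extension theorem to assemble the individual arc isotopies into a single ambient $\psi$ is a point the paper leaves implicit, but it is the right justification and there is no genuine obstacle there.
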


The LOSS invariant is an invariant of the Legendrian knot $L$ determined by the $2g$-tuple of intersection points $\xx = \{x_1, \dots, x_{2g}\}$ which also defines a cycle in $ \HFKhat(-Y, L, \mathbf{t}_\xi)$.

\begin{theorem}[\cite{loss}] \label{thm:loss} Let $L$ be an oriented, null-homologous Legendrian knot in the closed contact three-manifold $(Y, \xi)$, and let $t_\xi$ denote the spin$^c$ structure on $Y$ induced by $\xi$. Then $\lossl$ and $\lossminus$ are invariants of the Legendrian isotopy class of $L$, where
\begin{itemize}
\item $\lossl $ is defined as the isomorphism class $ [\HFKhat(-Y, L, \mathbf{t}_\xi), [\xx ]]$, where $[\xx]$ is the homology class of the cycle $\xx = \{x_1, \dots, x_{2g}\}$, and 
\item $\lossminus$ as the isomorphism class $ [\HFKminus (-Y, L, \mathbf{t}_\xi), [\xx ]]$.
\end{itemize}
\end{theorem}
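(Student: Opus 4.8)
The plan is to follow the template used by Ozsv\'ath--Szab\'o and Honda--Kazez--Mati\'c to establish well-definedness of the contact class $\cc(\xi)$, now carrying along the second basepoint $z$ and working in $\HFKhat(-Y,L,\mathbf{t}_\xi)$ and $\HFKminus(-Y,L,\mathbf{t}_\xi)$ rather than in $\hfhat(-Y)$. Two things must be checked: that $\xx=\{x_1,\dots,x_{2g}\}$ is a cycle in each theory, and that its homology class $[\xx]$ is unchanged under every choice entering the construction---the basis of arcs adapted to $L$ and the compatible open book---as well as under Legendrian isotopy of $L$.

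First I would verify that $\xx$ is a cycle. Each $x_i$ is the single intersection point produced by the small pushoff defining $b_i$, and a short analysis of the thin isotopy regions shows that any positive domain leaving $\xx$ is forced either to cross the basepoint $w$ or to carry a negative local multiplicity; this is precisely the Honda--Kazez--Mati\'c argument that $\cc$ is a cycle in $\cfhat(S,\beta,\alpha,w)$. Introducing $z$ only further restricts the admissible domains, so $\xx$ remains a cycle in $\HFKhat$ and in $\HFKminus$.

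Next comes independence of the choices. Two bases of arcs adapted to $L$ are connected by a finite sequence of arc slides, which can be performed disjointly from $L$ since only $a_1$ meets $L$; each arc slide is realized as a handleslide in $(S,\beta,\alpha)$, and the associated holomorphic-triangle map is a quasi-isomorphism carrying the canonical generator of the old diagram to that of the new one, hence preserving $[\xx]$. For the open book, the Giroux correspondence reduces any two compatible open books containing $L$ on a page to a common one by positive Hopf stabilizations; I would show that a positive stabilization, chosen to take place away from $a_1$ so that $z$ and the knot $L$ are untouched, induces an isomorphism on $\HFKhat$ and $\HFKminus$ matching canonical generator to canonical generator. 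Finally, Legendrian isotopy of $L$ is realized, after passing to a common open book by the previous step, as an isotopy of $L$ across a page; one checks directly that sliding $L$, and with it the basepoint $z$ within its thin region, leaves the class unchanged.

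The main obstacle is the stabilization step in this doubly-pointed setting: proving that a positive Hopf stabilization induces an isomorphism on $\HFKhat(-Y,L,\mathbf{t}_\xi)$ and $\HFKminus(-Y,L,\mathbf{t}_\xi)$ identifying the two canonical generators, while simultaneously controlling the second basepoint $z$, the knot $L$, and the Alexander grading. This demands a careful analysis of the enlarged Heegaard diagram and of the single new intersection point created by the stabilization, and is where essentially all of the technical work concentrates; the cycle property and the arc-slide invariance are comparatively formal once the contact-invariant arguments are available.
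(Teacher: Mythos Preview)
The paper does not supply its own proof of this theorem: it is stated with the attribution \cite{loss} and is quoted as a result of Lisca--Ozsv\'ath--Stipsicz--Szab\'o, so there is nothing in the present paper to compare your proposal against. Your outline---cycle property of $\xx$, arc-slide invariance of the adapted basis, positive-stabilization invariance for the open book, and Legendrian-isotopy invariance---is essentially the architecture of the original proof in \cite{loss}, so your sketch is on the right track relative to the source paper, but it is not something the authors of the present note attempt or need.
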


We give a combinatorial description of the hat version $\lossl$ of the LOSS invariant.

\begin{theorem} 
\label{thm:combinatorial}
Suppose $(\Sigma, h)$ is an open book decomposition for the contact 3-manifold $(Y, \xi)$, equipped with a basis $\{a_i\}$ of arcs in $\Sigma$ adapted to the Legendrian knot $L \subset Y$. 
There exists an equivalent open book decomposition $(\Sigma, h')$ for $(Y, \xi)$ and a basis $\{a_i''\}$ adapted to $L$ such that the doubly-pointed Heegaard diagram for $(-Y, L)$, described by Lisca-Ozsv\'ath-Stipsicz-Szab\'o, has only bigon and square regions (except for the one polygonal region $D_w$ containing the basepoint $w$). The monodromy $h''$ differs from $h$ by an isotopy; that is, $h' = \psi \circ h$, where $\psi: \Sigma \to \Sigma$ is a diffeomorphism fixing the boundary and isotopic to the identity. 
\end{theorem}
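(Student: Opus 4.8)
The plan is to adapt Plamenevskaya's proof of Theorem~\ref{thm:plam} to the doubly-pointed setting, treating the second basepoint $z$ as an obstacle that the Sarkar--Wang finger moves must avoid. The starting observation is that the underlying singly-pointed diagram $(S,\beta,\alpha,w)$ of the Lisca--Ozsv\'ath--Stipsicz--Szab\'o diagram is precisely the Honda--Kazez--Mati\'c diagram to which Plamenevskaya applies her algorithm, so the only genuinely new feature is the basepoint $z$. By construction (see Figure~\ref{fig:loss}), $z$ lies in one of the thin regions $B_z$ created by the small isotopy between $\alpha_1$ and $\beta_1$ near the intersection point $x_1$; this region is a bigon before any modification. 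Thus if I can run Plamenevskaya's reduction while keeping $B_z$ a bigon, every region other than $D_w$ will be a bigon or a square, which is exactly the asserted niceness.

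First I would record the local model near $x_1$. After a preliminary isotopy of the adapted arc $a_1$ (and its pushoff $b_1$) supported in a neighborhood disjoint from $L$ and from $a_2,\dots,a_{2g}$, I arrange that the region $B_z$ containing $z$ is a standard thin bigon and that a collar neighborhood of $B_z$ meets no other $\alpha$ or $\beta$ arc. This isotopy produces the new adapted basis $\{a_i''\}$ promised in the statement, with $a_i''=a_i$ for $i\ge 2$ and $a_1''$ still meeting $L$ in a single transverse point, so that $z$ continues to record the orientation of $L$ as in Figure~\ref{fig:loss}.

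Next I would run the Sarkar--Wang algorithm exactly as in \cite{Plamenevskaya}, with distinguished region $D_w$, but forbidding every finger move from entering the collar neighborhood of $B_z$. The finger moves modify only the $\beta$ arcs, and, as in Plamenevskaya's argument, isotoping the portion $\overline{\phi(b_i)}\subset -\Sigma$ of $\beta_i$ corresponds to replacing the monodromy $h$ by $\psi\circ h$ for a diffeomorphism $\psi$ fixing $\partial\Sigma$ and isotopic to the identity; hence the resulting open book $(\Sigma,h'')$ with $h''=\psi\circ h$ is equivalent to $(\Sigma,h)$ and still supports $(Y,\xi)$. Because the finger moves avoid $z$, the knot encoded by the pair $(w,z)$ is unchanged, so the new diagram is still a doubly-pointed diagram for $(-Y,L)$, and the tuple $\xx=\{x_1'',\dots,x_{2g}''\}$ continues to represent $\lossl$. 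When the algorithm terminates, every region other than $D_w$ is a bigon or a square; since $B_z$ was preserved, the region containing $z$ is the bigon $B_z$, and the diagram is nice in the required sense.

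The main obstacle is the interaction in this last step: I must be sure that protecting the collar of $B_z$ does not obstruct the Sarkar--Wang reduction, that is, that the badness-reducing finger moves can always be routed around the thin bigon. This is where the preliminary normalization of $a_1''$ matters---by making $B_z$ arbitrarily thin and isolating it from the other arcs, the region-distance function used by Sarkar--Wang is unaffected by excising a neighborhood of $B_z$, so the algorithm terminates with the same guarantee as in the singly-pointed case. Equivalently, one may run the algorithm with both $w$ and $z$ as basepoints and then check directly that no finger move ever subdivides or merges the bigon $B_z$; I expect verifying this non-interference, rather than any new count of holomorphic disks, to be the crux of the argument.
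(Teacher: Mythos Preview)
Your overall strategy---adapt Plamenevskaya's argument and take care with the second basepoint $z$---is the right one, but you have made the interaction with $z$ harder than it needs to be, and the step you flag as ``the crux'' is left unjustified.

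The paper's proof rests on a single geometric observation that you do not use: in Plamenevskaya's application of the Sarkar--Wang algorithm, every finger move is an isotopy of a $\beta$ curve supported entirely in the monodromy side $-\Sigma \subset S$, whereas $z$ (together with the short arcs $\delta_\alpha$, $\delta_\beta$ joining $w$ to $z$) lives on the standard side $\Sigma$. Hence one never needs to \emph{forbid} finger moves from entering the region containing $z$. They are allowed to enter that region; when they do, they can only cut off a sub-region lying wholly in $-\Sigma$, and one simply keeps $z$ in the piece that still meets $\Sigma$. Since the isotopies never cross $\delta_\alpha$ or $\delta_\beta$, the knot encoded by $(w,z)$ is unchanged, and no preliminary modification of the basis $\{a_i\}$ is required. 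Niceness of the doubly-pointed diagram then follows directly from Plamenevskaya's theorem, with no extra termination argument.

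By contrast, your proposal hinges on running Sarkar--Wang while forbidding a collar of $B_z$, and you correctly identify that verifying termination under this restriction is the heart of the matter. But this is not established: the Sarkar--Wang complexity function and the routing of finger moves are not obviously insensitive to excising a region, and the appeal to ``thinness'' of $B_z$ is not a proof. (Incidentally, the region of $S\setminus(\alpha\cup\beta)$ containing $z$ need not be a bigon to begin with---the thin strip between $a_1$ and $b_1$ in $\Sigma$ generally continues across $\partial\Sigma$ into $-\Sigma$.) The missing idea is precisely the $\Sigma$ versus $-\Sigma$ separation; once you invoke it, the difficulty you anticipate simply does not arise.
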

\begin{proof}
Consider the open book decomposition $(\Sigma, h )$ with basis $\{a_i\}$ adapted to $L \subset Y$ as above. Let $S = \Sigma \cup -\Sigma$. 
We follow the steps in the proof of Theorem~\ref{thm:plam} to obtain an equivalent open book decomposition  $(\Sigma, h')$  for $Y$ such that the associated singly-pointed Heegaard diagram $\mathcal{H}' = (S,  \beta', \alpha, w)$ described by Honda-Kazez-Mati\'c for $-Y$ has only bigon and square regions (except for the one polygonal region $D_w$ containing the basepoint $w$). The monodromy $h'$ differs from $h$ by an isotopy; that is, $h' = \phi \circ h$, where $\phi: \Sigma \to \Sigma$ is a diffeomorphism fixing the boundary and isotopic to the identity.

In more details, in the proof of Theorem~\ref{thm:plam}, Plamenevskaya applies the Sarkar-Wang algorithm to modify the Heegaard diagram via a sequence of isotopies. In particular, all of these isotopies occur on the $\beta$ curves and occur in the $-\Sigma$ (which we refer to as the monodromy side of $S$) part of the Heegaard surface $S$. Recall that the second basepoint $z$ is placed in a region between $a_1$ and $b_1$ in $\Sigma$ (which we refer to as the standard side of $S$). Thus, an isotopy of $\beta_i$ occurs entirely in $-\Sigma$ and either:
\begin{enumerate}
\item does not go through the region containing $z$. 
\item does go through the region containing $z$. If the isotopy is a finger move through the region containing $z$, then it divides this region into two pieces, one of which is entirely contained on the monodromy side $-\Sigma$; the basepoint $z$ is chosen to remain in the region that intersects the standard side $\Sigma$.  
\end{enumerate} 
We emphasize that these isotopies never cross the arc $\delta_\alpha$, respectively $\delta_\beta$, connecting $w$ and $z$ in the complement of the $\alpha$ circles, respectively $\beta$ circles, since the arcs $\delta_\alpha$ and $\delta_\beta$ lie on the standard side $\Sigma$ of the surface $S$. The resulting diagram $(S, \alpha, \beta', w, z)$ is a doubly-pointed nice diagram for the Legendrian $L$, since $L$ is Legendrian isotopic to $ \delta_\alpha \cup \delta_\beta$. Thus, the resulting filtered chain complex associated to the doubly-pointed Heegaard diagram $(S', \alpha', \beta', w, z)$ is filtered chain homotopic to the filtered chain complex of $(S, \alpha, \beta, w, z)$ by \cite[Theorem 3.1]{OS04}.

Therefore, we obtain a nice Heegaard diagram by performing this sequence of isotopies in $-\Sigma \subset S$. A composition of these isotopies gives a diffeomorphism $\psi:\Sigma \to \Sigma$ fixing the boundary and isotopic to the identity. The resulting open book decomposition $(\Sigma, h' = \psi \circ h)$ is equivalent to the original open book decomposition  $(\Sigma, h)$. 
\end{proof}

\bibliographystyle{alpha}
\bibliography{bib}

\begin{thebibliography}{BVVV13}

\bibitem[BVVV13]{BVVV}
John~A. Baldwin, David~Shea Vela-Vick, and Vera V\'{e}rtesi.
\newblock On the equivalence of {L}egendrian and transverse invariants in knot
  {F}loer homology.
\newblock {\em Geom. Topol.}, 17(2):925--974, 2013.

\bibitem[HKM09]{HKM}
Ko~Honda, William~H. Kazez, and Gordana Mati\'{c}.
\newblock On the contact class in {H}eegaard {F}loer homology.
\newblock {\em J. Differential Geom.}, 83(2):289--311, 2009.

\bibitem[LMW08]{LMW}
Robert Lipshitz, Ciprian Manolescu, and Jiajun Wang.
\newblock Combinatorial cobordism maps in hat {H}eegaard {F}loer theory.
\newblock {\em Duke Math. J.}, 145(2):207--247, 2008.

\bibitem[LOSS09]{loss}
Paolo Lisca, Peter Ozsv\'{a}th, Andr\'{a}s~I. Stipsicz, and Zolt\'{a}n
  Szab\'{o}.
\newblock Heegaard {F}loer invariants of {L}egendrian knots in contact
  three-manifolds.
\newblock {\em J. Eur. Math. Soc. (JEMS)}, 11(6):1307--1363, 2009.

\bibitem[MOS09]{MOS}
Ciprian Manolescu, Peter Ozsv\'{a}th, and Sucharit Sarkar.
\newblock A combinatorial description of knot {F}loer homology.
\newblock {\em Ann. of Math. (2)}, 169(2):633--660, 2009.

\bibitem[OS04a]{OS04}
Peter Ozsv{\'a}th and Zolt{\'a}n Szab{\'o}.
\newblock Holomorphic disks and knot invariants.
\newblock {\em Advances in Mathematics}, 186(1):58--116, 2004.

\bibitem[OS04b]{OS}
Peter Ozsv\'{a}th and Zolt\'{a}n Szab\'{o}.
\newblock Holomorphic disks and topological invariants for closed
  three-manifolds.
\newblock {\em Ann. of Math. (2)}, 159(3):1027--1158, 2004.

\bibitem[OS05]{OS-contact}
Peter Ozsv\'{a}th and Zolt\'{a}n Szab\'{o}.
\newblock Heegaard {F}loer homology and contact structures.
\newblock {\em Duke Math. J.}, 129(1):39--61, 2005.

\bibitem[OST08]{OST}
Peter Ozsv\'{a}th, Zolt\'{a}n Szab\'{o}, and Dylan Thurston.
\newblock Legendrian knots, transverse knots and combinatorial {F}loer
  homology.
\newblock {\em Geom. Topol.}, 12(2):941--980, 2008.

\bibitem[Pla07]{Plamenevskaya}
Olga Plamenevskaya.
\newblock A combinatorial description of the {H}eegaard {F}loer contact
  invariant.
\newblock {\em Algebr. Geom. Topol.}, 7:1201--1209, 2007.

\bibitem[SW10]{SarkarWang}
Sucharit Sarkar and Jiajun Wang.
\newblock An algorithm for computing some {H}eegaard {F}loer homologies.
\newblock {\em Ann. of Math. (2)}, 171(2):1213--1236, 2010.

\bibitem[TN19]{lev}
Lev Tovstopyat-Nelip.
\newblock Grid invariants in universally tight lens spaces, 2019.
\newblock preprint, arXiv:1809.07272v2.

\end{thebibliography}

\end{document}